\newcommand{\hI}{\mathcal{I}}
\newcommand{\hN}{\mathcal{N}}
\newcommand{\Lk}{\mathcal{L}_{\kappa}}
\newcommand{\Linf}{L^{\infty}}
\newcommand{\e}{\mathrm{e}}
\newcommand{\rd}{\mathrm{~d}}
\newcommand{\bx}{{\bm{x}}}
\crefname{hypothesis}{Hypothesis}{Hypotheses}
\title{
Maximum bound preservation of exponential integrators for Allen--Cahn equations
}
\author{Chaoyu Quan\thanks{School of Science and Engineering, The Chinese University of Hong Kong, Shenzhen, 518172, Guangdong, People's Republic of China (\email{quanchaoyu@cuhk.edu.cn}).}  
  \and Pinzhong Zheng\thanks{Department of Applied Mathematics, The Hong Kong Polytechnic University, Kowloon, Hong Kong SAR 
 (\email{pinzhong.zheng@connect.polyu.hk}, \email{zhizhou@polyu.edu.hk}).}  
  \and Zhi Zhou\footnotemark[2]}
\begin{document}
\maketitle
\begin{abstract}
We develop and analyze a class of arbitrarily high-order, maximum bound preserving time-stepping schemes for solving Allen–Cahn equations. These schemes are constructed within the iterative framework of exponential integrators, combined with carefully chosen numerical quadrature rules, including the Gauss--Legendre quadrature rule and the left Gauss--Radau quadrature rule.  Notably, the proposed schemes are rigorously proven to unconditionally preserve the maximum bound without requiring any additional postprocessing techniques, while simultaneously achieving arbitrarily high-order temporal accuracy. A thorough error analysis in the $L^2$ norm is provided. Numerical experiments validate the theoretical results, demonstrate the effectiveness of the proposed methods, and highlight that an inappropriate choice of quadrature rules may violate the maximum bound principle, leading to incorrect dynamics.
\end{abstract}

\begin{keywords}
Exponential integrators, 
maximum bound preservation, 
Allen--Cahn equation, 
high-order methods.
\end{keywords}

\begin{MSCcodes}
65M12,65M15,65R20
\end{MSCcodes}

\section{Introduction}\label{sec:intro}
In this work, we explore a class of maximum bound preserving (MBP) exponential integrators for solving the Allen--Cahn equation
\begin{equation}\label{eq:AC}
    \left\{
    \begin{aligned}
         & u_{t}      = \varepsilon^2 \Delta u+ f(u), &  & \bm{x} \in \Omega, ~  t \in(0,T], \\
         & u(\bm{x},0)  =u^0(\bm{x}),                    &  & \bm{x} \in \Omega ,
    \end{aligned}
    \right.
\end{equation}
subject to periodic boundary conditions. 
Here, $\Omega$ denotes a bounded domain in $\mathbb{R}^d~ (d=1,2,3)$, with $\Delta$ representing the Laplacian operator in $d$ dimensions. 
The unknown function $u$ denotes the phase variable, and the parameter $\varepsilon >0$ represents the interfacial width. 
The nonlinear term is given by $f(u) = -F^{\prime}(u)$, where $F$ is a double-well potential function. A typical example is the polynomial potential, given by $f(u) = u - u^3$.
In this paper, we consider a more general nonlinear function $f: \mathbb{R} \to \mathbb{R}$, assumed to be continuously differentiable and satisfy the condition
\begin{equation}\label{assump_f}
\exists \text{ a constant } \beta > 0 \text{ such that } f(\beta) \leq 0 \leq f(-\beta).
\end{equation}
It is well known that the Allen--Cahn equation has the \textit{maximum bound principle}, which states that if the initial and boundary data are bounded by $\beta$ in absolute value, then the absolute value of the solution remains bounded by $\beta$ everywhere and for all time. Additionally, the Allen--Cahn equation satisfies the \textit{energy dissipation law}. The Allen--Cahn equation \eqref{eq:AC} can be viewed as an $L^2$ gradient flow with respect to the energy functional:
\begin{equation*}
E(u)=\int_{\Omega}\left(\frac{\varepsilon^2}{2}|\nabla u|^2+F(u)\right) \rd  \bm{x},
\end{equation*}
and thus it holds that
\[
    \frac{\mathrm{d}}{\mathrm{d} t} E(u)=\left(\frac{\delta E(u)}{\delta u}, \frac{\partial u}{\partial t}\right)=-\left\|{\partial_{t} u}\right\|_{L^2}^2 \leq 0, \quad \forall t>0,
\]
where $(\cdot, \cdot)$ represents the standard $L^2$ inner product.

Exponential integrators (EI) have a long history, dating back to early works such as
\cite{MR102923_1958,MR117917_1960,Pope1963ExponentialMethod,Lawson1967GeneralizedRungeKutta,EhleLawson1975GeneralizedRungeKutta,HochbruckEtAl1998ExponentialIntegrators,CoxMatthews2002ExponentialTime}.
The fundamental idea of EI is to perform an exact integral of the linear part while approximating the integral of the nonlinear part.
In particular, when the nonlinear part vanishes, the method reduces to computing the exponential function of the linear operator (or matrix).
Although computing matrix exponential was historically considered impractical, significant advancements have been achieved over the past two decades, particularly in Krylov subspace methods \cite{GallopoulosSaad1992EfficientSolution,HochbruckLubich1997KrylovSubspace,HochbruckEtAl1998ExponentialIntegrators,MolerVanLoan2003NineteenDubious}.
For comprehensive reviews of exponential integrators and their implementation, we direct readers to~\cite{HochOster2010EI,Fasi2024}.

In recent years, exponential integrators have emerged as a powerful tool for developing unconditionally maximum bound preserving (MBP) schemes for the Allen--Cahn equation. The maximum bound principle is physically crucial for Allen--Cahn equations, as its violation can lead to unphysical solutions or numerical instability, particularly in long-time simulations. In 2019, Du et al.~\cite{DuEtAl2019MaximumPrinciple} introduced two unconditionally MBP schemes using the exponential time differencing (ETD) method: the first-order stabilized ETD1 scheme and the second-order stabilized ETD Runge--Kutta (ETDRK2) scheme. Later, Du et al.~\cite{DuEtAl2021MaximumBound} proved that, for a class of semilinear parabolic equations, both stabilized ETD1 and ETDRK2 schemes unconditionally preserve the maximum bound. This approach was further extended to matrix-valued equations~\cite{LiuEtAl2024MaximumBound}. 
However, \cite{DuEtAl2021MaximumBound} asserted that classical ETD methods of order higher than two cannot unconditionally preserve the maximum bound. To address this limitation, Li et al. \cite{LiEtAl2021StabilizedIntegrating} combined the integrating factor Runge–Kutta (IFRK) method, originally introduced in \cite{Lawson1967GeneralizedRungeKutta}, with the stabilization technique proposed in \cite{DuEtAl2021MaximumBound}. This combination led to the development of a class of stabilized IFRK schemes that are unconditionally MBP and achieve up to third-order accuracy. However, in their work~\cite{LiEtAl2021StabilizedIntegrating}, they explicitly acknowledged that they were unable to construct fourth-order or higher-order unconditionally MBP schemes using this approach. To overcome these challenges, various postprocessing techniques have been employed to enforce the maximum bound. For example, a (nonsmooth) cut-off technique was utilized in~\cite{LiEtAl2020ArbitrarilyHighOrder, YangEtAl2022ArbitrarilyHighOrder} to develop arbitrarily high-order MBP multistep ETD schemes. More recently, a (smooth) rescaling technique was proposed in~\cite{QuanEtAlMaximumBound} to ensure both the maximum bound and the original energy dissipation for arbitrarily high-order single-step ETD methods. See also a closely related Lagrangian multiplier approach \cite{ChengShen:CMAME2022, ChengShen:SINUM2022}.  These studies collectively highlight the need for post-processing procedures to preserve the maximum bound. This raises an intriguing question: \textit{is it possible to construct arbitrarily high-order time-stepping schemes that unconditionally preserve the maximum bound without any postprocessing?} This study aims to provide an answer to this question.

In this paper, we develop and analyze a class of time-stepping schemes for the Allen–Cahn equation that achieve arbitrarily high-order temporal accuracy while unconditionally preserving the maximum bound of the phase field variable. These schemes are constructed within the iterative framework of exponential integrators, combined with carefully selected numerical quadrature rules.  The choice of quadrature rule is critical in ensuring that the numerical scheme preserves the maximum bound. Specifically, we employ quadrature rules of the form  
\[
\int_{0}^{\tau} g(s) \, \mathrm{d}s = \sum_{i=1}^{m} w_i g(s_i) + \text{Residue},
\]
such that, for $g(s) = a^s $ with $a > 1 $, the Residue is always nonnegative. In other words, the quadrature rules we choose underestimate the integral for exponential functions. We demonstrate that Gauss–Legendre and left Gauss–Radau quadrature rules satisfy this property while achieving arbitrarily high-order accuracy.  
This underestimation ensures that the numerical solution remains strictly within the theoretical upper bound, thereby preserving the maximum bound at the numerical level. Notably, our proposed schemes are rigorously proven to unconditionally preserve the maximum bound without requiring any additional postprocessing techniques, while simultaneously achieving arbitrarily high-order temporal accuracy. To the best of our knowledge, this is the first MBP numerical method that achieves arbitrarily high-order accuracy for solving the Allen–Cahn equation without relying on postprocessing techniques. Numerical experiments are provided to validate the theoretical results and demonstrate the accuracy and robustness of the proposed methods. Furthermore, in our numerical results, we demonstrate that inappropriate choices of quadrature rules, such as Gauss--Lobatto and right Gauss-Radau quadrature rules, can lead to violations of the maximum bound principle and result in unphysical solution behavior. This observation highlights the critical importance of our quadrature selection strategy. 

The organization of this paper is as follows. In Section \ref{sec:preliminaries}, we reformulate equation \eqref{eq:AC} using the stabilization technique and introduce several useful lemmas for the choice of quadrature rules. Section \ref{sec:EI_methods} presents a class of high-order exponential integrator schemes and proves that these schemes are unconditionally MBP. In Section \ref{sec:error}, we provide a detailed error analysis, demonstrating the optimal convergence of the $k$th-order schemes, where $k$ can be arbitrarily large. Finally, in Section \ref{sec:experiments}, we perform numerical experiments to validate the theoretical results.

\section{Preliminaries}\label{sec:preliminaries}

To begin, we rewrite the Allen--Cahn equation using the  stabilization technique \cite{DuEtAl2021MaximumBound}. 
By introducing a stabilizing constant 
$\kappa > 0$, and adding and subtracting a stabilization term 
$\kappa {u}$ to the equation \eqref{eq:AC}, 
we obtain an equivalent form
\begin{equation}\label{eq:AC_stablized_form}
    u_{t}= \Lk u+\mathcal{N}(u), \quad \bm{x} \in \Omega,~ t>0,
\end{equation}
where the linear operator $\Lk$ and nonlinear operator $\mathcal N$ are defined as
\begin{equation}\label{eq:L_kappa, N}
    \Lk u \coloneqq \varepsilon^2 \Delta u - \kappa u,\quad
    \mathcal{N}(u) \coloneqq f(u)+\kappa u.
\end{equation}
The stabilization constant 
$\kappa$ must satisfy the condition:
\begin{equation}\label{condition_kappa}
    \kappa \ge  \max _{|\xi| \leq \beta}\left|f^{\prime}(\xi)\right|.
\end{equation}
For example, for the Allen--Cahn equation with polynomial potential $f(u)=u-u^3$, it is required that $\kappa \ge 2$ where $\beta=1$.
By the condition \eqref{condition_kappa}, we have
\begin{equation}\label{N_prime}
    0<\hN^{\prime}(\xi)=f^{\prime}(\xi)+\kappa<2\kappa, \quad \forall \xi \in [-\beta, \beta].
\end{equation}
From the assumption \eqref{assump_f} and \eqref{N_prime}, it follows that for any $\xi\in[-\beta,\beta]$,
\begin{equation}\label{N_bound}
    |\hN(\xi)|\leq \kappa\beta,
\end{equation}
because
\begin{equation*}
    -\kappa\beta \le -\kappa\beta + f(-\beta) = \hN(-\beta) \le
    \hN(\xi) \le \hN(\beta) = f(\beta)+\kappa\beta \le \kappa\beta.
\end{equation*}


To establish the preservation of maximum bound, we need the following lemma of Gauss type quadrature rules.



\begin{lemma}\label{lemma:gauss_quadrature}
    For a function \( g \) defined on \( [a,b] \), the optimal quadrature rule of degree \( 2J+K-1 \) for numerical integration is given by
    \begin{equation}
        \int_a^b g(x) \rd x = \sum_{j=1}^J w_j g(t_j) + \sum_{k=1}^{K} v_k g(z_k) + R_{J,K}(g),
    \end{equation}
    where \( \{t_j\}_{j=1}^{J} \subset (a,b) \) are inner nodes, and \( \{z_k\}_{k=1}^{K} \subset \{a,b\} \) are boundary nodes. The corresponding weights \( \{w_j\}_{j=1}^{J} \) and \( \{v_k\}_{k=1}^{K} \) are positive. The remainder term \( R_{J,K}(g) \) vanishes whenever \( g \) is a polynomial of degree at most \( 2J+K-1 \).

    If \( g \) is \( (2J+K) \)-times continuously differentiable on \( [a, b] \), the remainder term is
    \begin{equation}
        R_{J,K}(g) = \frac{g^{(2J+K)}(\xi)}{(2J+K)!} \int_a^b \prod_{k=1}^{K} (x - z_k) \prod_{j=1}^{J} (x - t_j)^2 \rd x, \quad \xi \in [a, b].
    \end{equation}
\end{lemma}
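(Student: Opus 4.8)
The plan is to establish this as a classical result in the theory of Gauss-type quadrature, specifically the theory of Gaussian quadrature with fixed (prescribed) nodes, which encompasses Gauss-Legendre (when $K=0$), Gauss-Radau (when $K=1$, one endpoint fixed), and Gauss-Lobatto (when $K=2$, both endpoints fixed) rules. The key algebraic object is the node polynomial, and the strategy proceeds by first constructing the rule, then verifying its exactness degree, and finally deriving the remainder formula.

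\medskip

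\emph{Construction and exactness.} First I would fix the boundary nodes $\{z_k\}_{k=1}^K \subset \{a,b\}$ and introduce the fixed factor $\omega(x) = \prod_{k=1}^K (x-z_k)$, which does not change sign on the interior $(a,b)$ (each factor $x-a \ge 0$ and $b-x \ge 0$). The inner nodes $\{t_j\}_{j=1}^J$ are then chosen as the roots of the degree-$J$ polynomial that is orthogonal, with respect to the weight $|\omega(x)|$ on $[a,b]$, to all polynomials of degree less than $J$; standard orthogonal-polynomial theory guarantees that these $J$ roots are real, simple, and lie in $(a,b)$. Given all $J+K$ nodes, the weights $w_j, v_k$ are defined via interpolatory quadrature (integrating the Lagrange basis polynomials), which automatically makes the rule exact for all polynomials of degree at most $J+K-1$. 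To reach degree $2J+K-1$, I would take any polynomial $p$ of that degree and divide it by the node polynomial $P(x) = \prod_{j=1}^J (x-t_j)$, writing $p = qP + r$ with $\deg q \le J-1$ and $\deg r \le J+K-1$; the orthogonality of $P$ against $q\,\omega/|\omega|$ (up to the sign of $\omega$) kills the quotient contribution in the exact integral while vanishing on the nodes, leaving exactness on $r$, which is handled by the interpolatory property.

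\medskip

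\emph{Positivity of weights and the remainder.} For positivity of the interior weights $w_j$, I would apply the rule to the test polynomial $\ell_j(x)^2 \, \omega(x)/(\text{sign factor})$ built from the appropriate Lagrange factors squared, a standard argument giving $w_j > 0$ since the integrand is nonnegative and the squared term isolates the single node $t_j$; the boundary weights $v_k$ require a slightly different but analogous construction exploiting that degree $2J+K-1$ accommodates $P(x)^2$ times a linear factor. For the remainder, I would invoke Hermite interpolation at the nodes: the interior nodes $t_j$ are treated as double nodes and the boundary nodes $z_k$ as simple nodes, so the interpolation error for a $(2J+K)$-times differentiable $g$ carries the factor $\prod_{k=1}^K (x-z_k)\prod_{j=1}^J (x-t_j)^2$ and a divided difference equal to $g^{(2J+K)}(\xi_x)/(2J+K)!$. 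Integrating this error and applying the mean value theorem for integrals—which is legitimate precisely because the kernel $\prod_k(x-z_k)\prod_j(x-t_j)^2$ has constant sign on $[a,b]$—yields the stated remainder with a single evaluation point $\xi \in [a,b]$.

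\medskip

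\emph{Main obstacle.} The delicate point is the sign-definiteness of the kernel $\prod_{k=1}^K(x-z_k)\prod_{j=1}^J(x-t_j)^2$, which is exactly what permits pulling $g^{(2J+K)}$ out via the integral mean value theorem. The squared interior factors are automatically nonnegative, so the sign is governed entirely by $\omega(x)=\prod_k(x-z_k)$; this is why the construction restricts $z_k$ to the endpoints, ensuring each boundary factor keeps a fixed sign on $[a,b]$. I would emphasize that this sign structure is precisely the feature later exploited for maximum bound preservation, and note that the result itself is classical (see, e.g., standard references on Gauss–Radau and Gauss–Lobatto quadrature), so the proof here is primarily a matter of assembling these well-known ingredients in the unified form stated.
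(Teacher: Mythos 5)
Your proposal is correct, and for the part of the lemma the paper actually proves---the remainder formula---it follows essentially the same route: Hermite interpolation with double nodes at the interior points $t_j$ and simple nodes at the boundary points $z_k$, whose error carries the kernel $\prod_k(x-z_k)\prod_j(x-t_j)^2$, followed by integration. The paper's proof (via its Lemma~\ref{lemma:gauss_quadrature_2}) simply takes the existence of the nodes, the exactness of degree $2J+K-1$, and the positivity of the weights as classical facts, whereas you additionally sketch their derivation (orthogonality with respect to the weight $|\omega|$, polynomial division by the node polynomial, and the squared-Lagrange test functions); this is more work than the paper does but is all standard and correct. One point where your write-up is actually \emph{more} careful than the paper's: the paper writes the pointwise interpolation error with a single $\xi$ and integrates it directly, silently using the integral mean value theorem; you explicitly identify that pulling $g^{(2J+K)}(\xi)$ out of the integral requires the kernel to have constant sign on $[a,b]$, and you correctly locate the source of that sign-definiteness in the restriction of the $z_k$ to the endpoints. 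Since that same sign structure is what drives the maximum bound preservation later in the paper, making it explicit is a genuine improvement rather than a deviation. No gaps.
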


\begin{remark}
The quadrature rule in Lemma~\ref{lemma:gauss_quadrature} is part of the well-known Gauss quadrature family:  
\begin{itemize}
    \item[(i)] For $K = 0$, the rule reduces to the Gauss--Legendre quadrature rule, which does not involve any prescribed boundary nodes.  
    \item[(ii)] For $K = 1$ with $z_1 = a$, the rule corresponds to the left Gauss--Radau quadrature rule.  
    \item[(iii)] For $K = 1$ with $z_1 = b$, it becomes the right Gauss--Radau quadrature rule.  
    \item[(iv)] For $K = 2$ with $z_1 = a$ and $z_2 = b$, the rule corresponds to the Gauss--Lobatto quadrature rule.  
\end{itemize}
Furthermore, if the function $g$ has a positive $(2J+K)$-th derivative over $[a, b]$, the Gauss--Legendre and left Gauss--Radau rules underestimate the exact integral, while the right Gauss--Radau and Gauss--Lobatto rules overestimate it.
\end{remark}

We note that Lemma \ref{lemma:gauss_quadrature} is a direct consequence of the following well-known results; see, e.g., \cite[Theorem 2.1.5.9]{StoerBulirsch2002IntroductionNumerical}.
\begin{lemma} 
    \label{lemma:gauss_quadrature_2}
    Let the real-valued function \( g \) be \( (n+1) \)-times differentiable on the interval \([a, b]\), and consider \( m \) distinct nodes \( x_i \in [a, b] \), where \( x_1 < x_2 < \cdots < x_m \). Suppose the polynomial \( P(x) \) is of degree at most \( n \), satisfies \( \sum_{i=1}^m n_i = n+1 \), and interpolates \( g \) such that  
    \[
    P^{(k)}\left(x_i\right) = g^{(k)}\left(x_i\right), \quad i = 1, 2, \ldots, m, \quad k = 0, 1, \ldots, n_i-1.
    \]  
    Then, for any \( \bar{x} \in [a, b] \), there exists \( \bar{\xi} \in I[x_1, \ldots, x_m, \bar{x}] \) such that  
    \[
    g(\bar{x}) - P(\bar{x}) = \frac{\omega(\bar{x}) g^{(n+1)}(\bar{\xi})}{(n+1)!},
    \]  
    where \( \omega(x) = \prod_{i=1}^m (x - x_i)^{n_i} \), and \( I[x_1, \ldots, x_m, \bar{x}] \) denotes the smallest interval containing \( x_1, \ldots, x_m, \bar{x} \).
\end{lemma}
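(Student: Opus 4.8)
The statement is precisely the classical Hermite interpolation error formula, and the plan is to prove it by applying a multiplicity-aware version of Rolle's theorem to a cleverly constructed auxiliary function. First I would dispose of the trivial case: if the evaluation point $\bar x$ coincides with one of the interpolation nodes $x_i$, then $\omega(\bar x)=0$ and, because the interpolation conditions include the case $k=0$, also $g(\bar x)=P(\bar x)$; hence both sides of the claimed identity vanish and any $\bar\xi$ serves. From now on I would assume $\bar x\notin\{x_1,\ldots,x_m\}$, so that $\omega(\bar x)\neq 0$ and the quotient below is well defined.

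Next I would introduce the auxiliary function
\[
\phi(t) = g(t) - P(t) - \frac{g(\bar x) - P(\bar x)}{\omega(\bar x)}\,\omega(t), \qquad t \in [a,b],
\]
whose defining constant is chosen precisely so that $\phi(\bar x)=0$. Since $g$ is $(n+1)$-times differentiable and $P,\omega$ are polynomials, $\phi$ is $(n+1)$-times differentiable. The key observation is a multiplicity count for the zeros of $\phi$ on the interval $I \coloneqq I[x_1,\ldots,x_m,\bar x]$. At each node $x_i$, the conditions $P^{(k)}(x_i)=g^{(k)}(x_i)$ for $k=0,\ldots,n_i-1$ force $g-P$ to vanish to order at least $n_i$, while $\omega(t)=\prod_{j}(t-x_j)^{n_j}$ vanishes to order exactly $n_i$ at $x_i$; consequently $\phi$ vanishes to order at least $n_i$ at $x_i$. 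Together with the simple zero at $\bar x$, this yields at least $\sum_{i=1}^{m} n_i + 1 = (n+1)+1 = n+2$ zeros of $\phi$, counted with multiplicity, lying in $I$.

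The main step, and the place where the argument must be made rigorous, is the generalized Rolle's theorem: if a function that is $N$ times differentiable has at least $N+1$ zeros counted with multiplicity on an interval $J$, then its $N$th derivative has at least one zero in $J$. I would establish this by induction, showing that passing from $\psi$ to $\psi'$ loses at most one zero from the multiplicity count. Concretely, if $\psi$ has distinct zero points with multiplicities $m_1,\ldots,m_s$ and total count $M=\sum_i m_i$, then each point of order $m_i\ge 2$ remains a zero of order $m_i-1$ of $\psi'$, and ordinary Rolle supplies one further zero of $\psi'$ strictly between each pair of consecutive distinct zero points, giving $\sum_i(m_i-1)+(s-1)=M-1$ zeros of $\psi'$; all of these lie in $J$. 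Iterating, $\psi^{(N)}$ retains at least $M-N$ zeros. Applying this to $\phi$ with $M\ge n+2$ and $N=n+1$ produces a point $\bar\xi\in I$ with $\phi^{(n+1)}(\bar\xi)=0$.

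Finally I would evaluate $\phi^{(n+1)}$. Since $\deg P\le n$ we have $P^{(n+1)}\equiv 0$, and since $\omega$ is monic of degree exactly $n+1$ its $(n+1)$th derivative is the constant $(n+1)!$; thus
\[
0 = \phi^{(n+1)}(\bar\xi) = g^{(n+1)}(\bar\xi) - \frac{g(\bar x) - P(\bar x)}{\omega(\bar x)}\,(n+1)!,
\]
and rearranging gives $g(\bar x)-P(\bar x)=\omega(\bar x)\,g^{(n+1)}(\bar\xi)/(n+1)!$, as claimed. I expect the only genuine subtlety to be the bookkeeping of multiplicities in the Rolle step, namely verifying that differentiation reduces the weighted zero count by exactly one at each stage and that all intermediate zeros remain inside $I$, so that the final $\bar\xi$ indeed belongs to $I[x_1,\ldots,x_m,\bar x]$.
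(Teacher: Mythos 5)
Your proof is correct. The paper does not actually prove this lemma---it is quoted as a known result with a pointer to \cite[Theorem~2.1.5.9]{StoerBulirsch2002IntroductionNumerical}---and your argument, namely the auxiliary function $\phi(t)=g(t)-P(t)-\frac{g(\bar x)-P(\bar x)}{\omega(\bar x)}\,\omega(t)$ combined with a generalized Rolle theorem counting zeros with multiplicity ($n+2$ zeros of $\phi$, hence a zero of $\phi^{(n+1)}$, and $\omega^{(n+1)}=(n+1)!$), is exactly the standard proof given in that reference, with the trivial case $\bar x\in\{x_1,\ldots,x_m\}$, the multiplicity bookkeeping under differentiation, and the localization of $\bar\xi$ in $I[x_1,\ldots,x_m,\bar x]$ all handled correctly.
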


\begin{proof}[Proof of Lemma \ref{lemma:gauss_quadrature}]
    Consider the polynomial \( P(x) \) of degree at most $2J+K-1$ that interpolates \( g \) such that
    \begin{equation*}
    P(z_k) = g(z_k), \quad
        P(t_j) = g(t_j), \quad
        P^{\prime}(t_j) = g^{\prime}(t_j), \quad 1 \leq j \leq J,\quad 1 \leq k \leq K.
    \end{equation*}
    By Lemma~\ref{lemma:gauss_quadrature_2}, there exists \( \xi \in [a,b] \) such that
    \begin{equation} \label{eq:xi}
        g(x) - P(x) = \frac{g^{(2J+K)}(\xi)}{(2J+K)!} 
        \prod_{k=1}^{K} (x - z_k) \prod_{j=1}^{J} (x - t_j)^2.
    \end{equation}
    Since the polynomial \( P \) is at most of degree \( 2J+K-1 \), the quadrature rule is exact for \( P \), i.e., the remainder \( R_{J,K}(P) = 0 \). 

Now we define
    \[
        I(g) \coloneqq \int_a^b g(x) \, \rd x \quad \text{and}\quad
        I_N(g) \coloneqq \sum_{j=1}^J w_j g(t_j) + \sum_{k=1}^{K} v_k g(z_k).
    \]
 It is straightforward to observe
    \begin{equation}
        I_N(g) = I_N(P) = I(P).
    \end{equation}
    By \eqref{eq:xi}, it follows that
    \begin{align*}
        I(g) - I_N(g) &= I(g) - I(P) = \int_a^b \big( g(x) - P(x) \big) \, \rd x \\
        &= \frac{g^{(2J+K)}(\xi)}{(2J+K)!} 
        \int_a^b \prod_{k=1}^{K} (x - z_k) \prod_{j=1}^{J} (x - t_j)^2 \, \rd x,
    \end{align*}
    which completes the proof.
\end{proof}

\section{MBP exponential integrators}\label{sec:EI_methods}
In this section, we present a class of exponential integrators constructed through an iterative approach combined with numerical quadrature for solving the stabilized Allen--Cahn equation \eqref{eq:AC_stablized_form}. We subsequently  prove that such exponential integrators are unconditionally MBP.

Given a positive integer $N$, we define the uniform time step as $\tau=T/N$ 
and set the time nodes $t_n=n \tau$ for $n=0,1,\cdots,N$.
We focus on the equivalent equation \eqref{eq:AC_stablized_form} on the interval $\left[t_n, t_{n+1}\right]$, or equivalently $w^{n}(\bm{x},s)=u\left(\bm{x},t_n+s\right)$ satisfying the system
\begin{equation}\label{eq:w}
    \left\{
    \begin{aligned}
         & \partial_{s} w^{n}  =\Lk w^{n} +\mathcal{N}(w^{n}), &  & \bm{x} \in \Omega ,~ s \in(0, \tau], \\
         & w^{n}(\bm{x},0)    =u\left(\bm{x},t_{n}\right),     &  & \bm{x} \in \overline{\Omega},
    \end{aligned}
    \right.
\end{equation}
equipped with periodic boundary conditions.
Applying Duhamel's principle to this system, we obtain
\begin{equation}\label{eq:Duhamel_w}
    w^{n}(\bm{x}, \tau) =
    \e^{\tau\Lk} w^{n}(\bm{x},0)
    + \int_0^\tau \e^{(\tau-s)\Lk} \mathcal{N}\left(w^{n}(\bm{x},s)\right)  \rd s.
\end{equation}

The key point in designing a numerical scheme is to find an appropriate way to approximate the nonlinear term $\mathcal{N}(w^{n}(s))$. 
Assume that the numerical solution at time $t_n$,
denoted by $u^{n}$, is already known.
A straightforward approach is to approximate $\mathcal{N}(w^n(s))$ by $\mathcal{N}(u^n)$, which is nothing new but the first-order ETD scheme. We define the solution of first-order ETD scheme as
\begin{equation}\label{eq:w1}
    w^{[1],n}(\tau) \coloneqq \e^{\tau \Lk} u^{n} 
    + (\e^{\tau\Lk}-\hI)\Lk^{-1} \hN(u^{n}).
\end{equation}

Then we use an iterative approach to construct high-order exponential integrators. 
Specifically, for $k \ge 2$, the $k$th-order approximation $w^{[k],n}(\tau)$ is constructed using the $(k-1)$th-order approximation $w^{[k-1],n}(s)$:
\begin{equation}\label{eq:wk1}
    w^{[k],n}(\tau) \approx \e^{\tau \Lk} u^{n}
+ \int_0^\tau \e^{(\tau-s)\Lk} \mathcal{N}(w^{[k-1],n}(s))  \rd s.
\end{equation}
However, the integral in \eqref{eq:wk1} cannot be computed exactly, so we approximate it using a quadrature rule with \( m_k \) nodes:
\begin{equation}\label{k_scheme}
    u^{n+1} = w^{[k],n}(\tau) 
    \coloneqq \e^{\tau \Lk} u^{n}
    + \sum_{i=1}^{m_k} w_i \e^{(\tau-s_i)\Lk}\hN(w^{[k-1],n}(s_i)),
\end{equation} 
where $w_i\ge 0$ and $s_i\in [0,\tau]$ are the weights and nodes of the quadrature rule, respectively. 
To ensure that the scheme achieves \( k \)th-order accuracy, the quadrature rule  
\[
\int_{0}^{\tau} g(s) \rd s \approx \sum_{i=1}^{m_k} w_i g(s_i)
\]  
must have at least \( (k-1) \)-th algebraic accuracy, which introduces a quadrature error of \( O(\tau^{k+1}) \).
Note that $w_i$ and $s_i$ may vary with $k$, as different quadrature rules might be used to achieve higher-order, but we omit this distinction for simplicity.

More precisely, this iterative process is given by as follows:  
For \( j =  2, 3, \dots, k \), the \( j \)th-order approximation \( w^{[j],n}(s) \) is defined as
\begin{equation}\label{eq:wj}
    w^{[j],n}(\tau) 
\coloneqq
\e^{\tau \Lk} u^{n}
+ 
\sum_{i=1}^{m_j} w_i
\e^{(\tau-s_i)\Lk} \mathcal{N}(w^{[j-1],n}(s_i)).
\end{equation}
Each \( w^{[j],n}(s) \) depends on the previous approximation \( w^{[j-1],n}(s) \), ultimately tracing back to the first-order approximation \( w^{[1],n}(s) \), which is defined in \eqref{eq:w1}. 
Similarly, the quadrature rule of $j$-th level iteration 
\[
\int_{0}^{\tau} g(s) \rd s \approx \sum_{i=1}^{m_j} w_i g(s_i)
\]  
must have at least \( (j-1) \)-th algebraic accuracy, which introduces a quadrature error of \( O(\tau^{j+1}) \).
We have developed a systematic iterative framework for deriving arbitrarily high-order  exponential integrators. These integrators achieve $k$th-order accuracy by approximating the integral term with suitably chosen quadrature rules. 
For convenience, we denote the $k$th-order exponential integrators as EI$k$.

The following theorem establishes the discrete maximum principle of EI$k$ \eqref{k_scheme}.
\begin{theorem}[discrete maximum principle]\label{thm:mp}
    Suppose that Assumption \ref{assump_f} holds and the stabilization constant $\kappa$ satisfies \eqref{condition_kappa}.
    If the Gauss--Legendre or left Gauss--Radau quadrature rule is applied at each iteration level in \eqref{eq:wj}, then the $k$th-order exponential integrators \eqref{k_scheme} unconditionally preserve the discrete maximum principle. 
    More precisely, let $\{u^n\}$ be the solution of the $k$th-order exponential integrators \eqref{k_scheme} with a time step size $\tau >0$.
    If the initial data satisfies $\left\| u^{0} \right\|_{\Linf} \le \beta$, then $\left\|u^{n}\right\|_{\Linf} \le \beta$ for any $n\ge 1$ and any $\tau >0$.
\end{theorem}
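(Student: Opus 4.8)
The plan is to reduce the entire statement to a single scalar inequality---that the chosen quadrature rule underestimates $\int_0^t\e^{\kappa s}\rd s$---and then to propagate the bound through the iterative construction by induction. The two structural ingredients I would record first are: (i) the semigroup $\e^{t\Lk}$ is an $\Linf$-contraction with the sharp rate, $\|\e^{t\Lk}v\|_{\Linf}\le\e^{-\kappa t}\|v\|_{\Linf}$ for all $t\ge 0$; and (ii) the pointwise bound \eqref{N_bound}, which yields $\|\hN(v)\|_{\Linf}\le\kappa\beta$ whenever $\|v\|_{\Linf}\le\beta$. Ingredient (i) holds because $\Lk=\varepsilon^2\Delta-\kappa$ gives $\e^{t\Lk}=\e^{-\kappa t}\e^{t\varepsilon^2\Delta}$, and under periodic boundary conditions the heat semigroup $\e^{t\varepsilon^2\Delta}$ is convolution with a nonnegative kernel of unit mass, hence a positivity-preserving $\Linf$-contraction.

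The heart of the argument is a scalar computation. Fix a time step $n$ and suppose $\|u^n\|_{\Linf}\le\beta$. For the first-order approximation I would use the identity $(\e^{t\Lk}-\hI)\Lk^{-1}=\int_0^t\e^{(t-s)\Lk}\rd s$ together with (i)--(ii) to obtain, for every $t\in[0,\tau]$,
\[\|w^{[1],n}(t)\|_{\Linf}\le\e^{-\kappa t}\beta+\kappa\beta\int_0^t\e^{-\kappa(t-s)}\rd s=\e^{-\kappa t}\beta+(1-\e^{-\kappa t})\beta=\beta,\]
which rests on the exact identity $\e^{-\kappa t}+\kappa\int_0^t\e^{-\kappa(t-s)}\rd s=1$. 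For a level $j\ge 2$, applying (i)--(ii) termwise and using that the weights are nonnegative gives, under the hypothesis $\|w^{[j-1],n}(s_i)\|_{\Linf}\le\beta$ at the nodes,
\[\|w^{[j],n}(t)\|_{\Linf}\le\e^{-\kappa t}\beta+\kappa\beta\sum_{i}w_i\,\e^{-\kappa(t-s_i)}.\]
Dividing by $\beta$ and multiplying by $\e^{\kappa t}$, the target bound $\|w^{[j],n}(t)\|_{\Linf}\le\beta$ is seen to be equivalent to $\kappa\sum_i w_i\,\e^{\kappa s_i}\le\e^{\kappa t}-1=\kappa\int_0^t\e^{\kappa s}\rd s$, i.e., to the quadrature rule underestimating $\int_0^t\e^{\kappa s}\rd s$. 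Since $g(s)=\e^{\kappa s}$ has strictly positive derivatives of every order, and the Gauss--Legendre ($K=0$) and left Gauss--Radau ($K=1$, $z_1=a$) rules produce a remainder $R_{J,K}(g)\ge 0$ by Lemma~\ref{lemma:gauss_quadrature} (the weight polynomial $\prod_k(x-z_k)\prod_j(x-t_j)^2$ being nonnegative on the relevant interval in both cases), this underestimation holds on every subinterval $[0,t]\subseteq[0,\tau]$.

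With these pieces I would close the argument by a two-layer induction. The outer induction is on the time index $n$: assuming $\|u^n\|_{\Linf}\le\beta$, I deduce $\|u^{n+1}\|_{\Linf}=\|w^{[k],n}(\tau)\|_{\Linf}\le\beta$. The inner induction is on the iteration level $j$, carried out with the strengthened hypothesis that $\|w^{[j],n}(t)\|_{\Linf}\le\beta$ for \emph{all} $t\in[0,\tau]$, not merely at $t=\tau$. This strengthening is essential because evaluating $w^{[j],n}$ at the quadrature nodes of the next level requires control of $w^{[j-1],n}$ throughout $[0,\tau]$, the nested quadratures living on subintervals $[0,s_i]$; the two displayed estimates furnish exactly the base case $j=1$ and the step $j-1\to j$.

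I expect the main difficulty to be conceptual rather than computational: recognizing that unconditional maximum bound preservation is controlled solely by the sign of the quadrature remainder applied to the exponential $g(s)=\e^{\kappa s}$. This is precisely what singles out Gauss--Legendre and left Gauss--Radau (nonnegative remainder) and rules out right Gauss--Radau and Gauss--Lobatto (nonpositive remainder, hence overestimation). The remaining point needing care is the bookkeeping of the inner induction so that all nested node evaluations are covered; ingredient (i) is standard for the periodic heat operator and causes no trouble.
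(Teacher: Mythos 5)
Your proposal is correct and follows essentially the same route as the paper's proof: the $\Linf$-contraction of $\e^{t\varepsilon^2\Delta}$ combined with the bound $\|\hN(v)\|_{\Linf}\le\kappa\beta$, reduction of the level-$j$ step to the scalar inequality $\kappa\sum_i w_i\e^{\kappa s_i}\le\e^{\kappa\tau}-1$, nonnegativity of the quadrature remainder for $\e^{\kappa s}$ via Lemma~\ref{lemma:gauss_quadrature}, and a nested induction over time steps and iteration levels. Your explicit remark that the inner induction must carry the bound for all $t\in[0,\tau]$ (to cover the nested node evaluations) is the same point the paper handles by stating its inductive hypothesis "for any $\tau>0$."
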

\begin{proof}
It suffices to prove that $\left\|u^{n}\right\|_{\Linf} \le \beta$ implies $\left\|u^{n+1}\right\|_{\Linf} \le \beta$ for any $n$.
First, we note that the family of operators $\{ e^{t \Delta} \}_{t\ge0}$ forms a strongly continuous semigroup on $ L^{\infty}(\Omega)$  and the parabolic maximum principle implies
\[
\left\| e^{t \Delta} u \right\|_{L^{\infty}} \leq \| u \|_{L^{\infty}}, \quad \forall~ t \geq 0,~ u \in L^\infty(\Omega),
\]
which shows that $e^{t \Delta}$ is a contraction map in $L^\infty(\Omega)$; see, e.g., \cite[Chapter 8]{Jost2013PartialDifferential}.

    By \eqref{N_bound}, we obtain $\left\| \hN(u^{n}) \right\|_{\Linf}\le \kappa\beta$ from $\left\| u^{n} \right\|_{\Linf} \le \beta$.
    Then, for the first-order scheme, we have
    \begin{align*}
        \left\| w^{[1],n}(\tau) \right\|_{\Linf}  
        &= 
        \left\| \e^{-\kappa\tau}\e^{\tau \varepsilon^2 \Delta} u^{n} 
        + \int_0^\tau  \e^{-\kappa(\tau-s)} \e^{(\tau-s) \varepsilon^2 \Delta} \mathcal{N}(u^{n}) \rd s \right\|_{\Linf} 
        \\ 
        &\le
        \e^{-\kappa\tau} \left\| u^{n} \right\|_{\Linf} 
        + \int_0^\tau \e^{-\kappa(\tau-s)} \left\| \mathcal{N}(u^{n}) \right\|_{\Linf}  \rd s 
        \\
        &\le  \e^{-\kappa\tau} \beta + \kappa\beta \int_0^\tau \e^{-\kappa(\tau-s)} \rd s = \beta.
    \end{align*}

    For $2\le j\le k$, assume that the $(j-1)$th-order scheme preserves the maximum bound unconditionally, i.e., $\left\| w^{[j-1],n}(\tau) \right\|_{\Linf} \le \beta$ for any $\tau >0$.
    Then, by \eqref{N_bound}, it follows that $\left\| \hN(w^{[j-1],n}(\tau)) \right\|_{\Linf}\le \kappa\beta$ for any $\tau >0$.
    Similarly, for the $j$th-order scheme, we have
    \begin{equation}\label{ineq:mbp1}
    \begin{aligned}
    \left\| w^{[j],n}(\tau) \right\|_{\Linf}
    &= \left\| \e^{\tau \Lk} u^{n} + \sum_{i=1}^{m_j} w_i \e^{(\tau-s_i)\Lk}\hN(w^{[j-1],n}(s_i)) \right\|_{\Linf}
    \\
    &\le \e^{-\kappa\tau} \left\| u^{n} \right\|_{\Linf} + \sum_{i=1}^{m_j} w_i \e^{-\kappa(\tau-s_i)} \left\| \hN(w^{[j-1],n}(s_i)) \right\|_{\Linf}
    \\
    &\le  \e^{-\kappa\tau}\beta + \kappa\beta\sum_{i=1}^{m_j} w_i \e^{\kappa (s_i-\tau)}
    = e^{-\kappa\tau} \left(1+\kappa\sum_{i=1}^{m_j} w_i \e^{\kappa s_i}\right)\beta. 
    \end{aligned}
    \end{equation}
    According to Lemma \ref{lemma:gauss_quadrature}, 
    for the Gauss quadrature rule, the remainder is
        \begin{equation*}
        \int_{0}^{\tau} \e^{\kappa s} \rd s - \sum_{i=1}^{m_j} w_i \e^{\kappa s_i}
        = \frac{\e^{\kappa \xi_1}}{\kappa^{2m_j}(2m_j)!}\int_0^{\tau} \prod_{j=1}^{m_j}(s-s_i)^2 \rd s \ge 0.
    \end{equation*}
    For left Gauss--Radau quadrature rule, the remainder is
    \begin{equation*}
        \int_{0}^{\tau} \e^{\kappa s} \rd s - \sum_{i=1}^{m_j} w_i \e^{\kappa s_i}
        = \frac{\e^{\kappa \xi_2}}{\kappa^{2m_j+1}(2m_j+1)!}\int_0^{\tau} s\prod_{j=1}^{m_j}(s-s_i)^2 \rd s 
        \ge 0.
    \end{equation*}
    Since the remainder term is non-negative for both the Gauss and the left Gauss--Radau quadrature rules, we obtain
    \begin{equation}\label{ineq:mp_3}
      \kappa\sum_{i=1}^{m_j} w_i \e^{\kappa s_i} \le \kappa \int_{0}^{\tau} \e^{\kappa s} \rd s = \e^{\kappa\tau}-1. 
    \end{equation}
Substituting \eqref{ineq:mp_3} into \eqref{ineq:mbp1}, we obtain  
$$
\left\| w^{[j],n}(\tau) \right\|_{\Linf} \le \beta, \quad \forall \tau > 0.
$$

    By induction, we have
    \[
        \left\| u^{n+1} \right\|_{\Linf}=\left\| w^{[k],n}(\tau) \right\|_{\Linf} \le \beta, \quad \forall \tau >0,
    \]
    which completes the proof.
\end{proof}

\begin{remark}
From the proof of Theorem \ref{thm:mp}, we observe that the key to preserve the maximum bound lies in selecting a quadrature rule such that the numerical approximation of $\e^x$ does not exceed its exact integral. Notably, the Gauss--Legendre quadrature rule and the left Gauss--Radau quadrature rule satisfy this requirement and are therefore suitable for constructing maximum bound preserving schemes. In contrast, the right Gauss--Radau quadrature rule and the Gauss--Lobatto quadrature rule fail to meet this requirement, which may result in numerical schemes that violate the maximum principle and exhibit incorrect dynamics. This behavior is observed in numerical experiments, as shown in Figures \ref{fig:dynamics}, \ref{fig:blowup}, and \ref{fig:lGR}.
\end{remark}

\section{Error analysis}\label{sec:error}
This section focuses on the error analysis of the numerical scheme \eqref{eq:wk1}-\eqref{eq:wj}, carried out within the iterative framework that was also used for its construction.
\begin{theorem}
    Suppose that Assumption \ref{assump_f} holds and the stabilization constant $\kappa$ satisfies \eqref{condition_kappa}.
    Let $k>0$ be the order and $T>0$ be a given total time. Suppose that $u(t)$ is the exact solution of \eqref{eq:AC} with initial data satisfying  
    \(
    \left\|u^{0}\right\|_{L^{\infty}}\le \beta.
    \)
    Additionally, assume that { $u\in C^1([0,T];L^2(\Omega)$ and the nonlinear term $\hN(u)$} satisfies  
\begin{equation}\label{cond_f_regularity}
        \sup_{0<t<T} \|\Delta^j \hN(u(t))\|_{L^2(\Omega)} < \infty, \quad \forall j \leq k.
\end{equation}
    Let $\{u^{n}\}$ be the numerical solution obtained by the $k$th-order scheme \eqref{k_scheme} with a time step size $\tau >0$.  
    Then, for any $\tau < 1$, there exists a constant $C>0$ independent of $\tau$ such that, at $t_n = n\tau \leq T$, the error estimate holds:
    \[
    \left\| u(t_n)-u^{n} \right\|_{L^2} \le C \e^{2\kappa T} \tau^k.
    \]
\end{theorem}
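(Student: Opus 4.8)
The plan is to run the classical consistency--stability--Gronwall argument, but carried out level by level inside the very iteration \eqref{eq:wj} that defines the scheme. Denote by $\hat w^{[j],n}(s)$ the scheme iterates generated from the \emph{exact} datum $u(t_n)$ (rather than $u^n$), and by $\hat P^{[j],n}(s)$ the corresponding \emph{exact} Picard iterates, i.e.\ the same recursion with the quadrature sum replaced by the genuine integral $\int_0^s\e^{(s-r)\Lk}\hN(\hat P^{[j-1],n}(r))\rd r$. Since the first level carries no quadrature, $\hat P^{[1],n}=\hat w^{[1],n}$. Writing $e^n:=u(t_n)-u^n$, I would then split the one-step error as
\[
u(t_{n+1})-u^{n+1}=\big(u(t_{n+1})-\hat w^{[k],n}(\tau)\big)+\big(\hat w^{[k],n}(\tau)-w^{[k],n}(\tau)\big),
\]
the first bracket being the local truncation error and the second the propagated error.

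\textbf{Consistency.} For the local truncation error I further split $u(t_{n+1})-\hat w^{[k],n}(\tau)$ into the Picard error $w^n(\tau)-\hat P^{[k],n}(\tau)$ and the accumulated quadrature error $\hat P^{[k],n}(\tau)-\hat w^{[k],n}(\tau)$. Subtracting \eqref{eq:Duhamel_w} from the Picard recursion and using that $\e^{s\Lk}$ is an $L^2$ contraction together with the Lipschitz bound $\|\hN(v_1)-\hN(v_2)\|_{L^2}\le 2\kappa\|v_1-v_2\|_{L^2}$ from \eqref{N_prime} (legitimate because every iterate is MBP, hence valued in $[-\beta,\beta]$), I obtain the contraction recursion $\|w^n(s)-\hat P^{[j],n}(s)\|_{L^2}\le 2\kappa\int_0^s\|w^n(r)-\hat P^{[j-1],n}(r)\|_{L^2}\rd r$. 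Iterating from the base bound $\|w^n(s)-\hat P^{[1],n}(s)\|_{L^2}\le Cs^2$ (which only uses $u\in C^1([0,T];L^2)$) gives a Picard error of size $C\tau^{k+1}$. The quadrature error is controlled by the same recursion, now fed by the one-step remainder: at level $j$ the integrand is $g_j(r)=\e^{(s-r)\Lk}\hN(\hat P^{[j-1],n}(r))$, and since the level-$j$ rule is exact to degree $j-1$, Lemma \ref{lemma:gauss_quadrature} bounds its remainder by $Cs^{j+1}\sup_r\|\partial_r^{j}g_j(r)\|_{L^2}$, which then propagates through the remaining $k-j$ levels (each contributing a factor $2\kappa\tau$) to contribute $O(\tau^{k+1})$. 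Hence the total local truncation error is $O(\tau^{k+1})$.

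\textbf{Stability and Gronwall.} Writing $\delta:=\|e^n\|_{L^2}$ and $d^{[j]}=\hat w^{[j],n}-w^{[j],n}$, the same contraction estimate combined with the underestimation identity $\sum_i w_i\e^{-\kappa(s-s_i)}\le\int_0^s\e^{-\kappa(s-r)}\rd r=(1-\e^{-\kappa s})/\kappa$ --- the very mechanism behind Theorem \ref{thm:mp} --- yields $\|d^{[j]}(s)\|_{L^2}\le \e^{-\kappa s}\delta+2(1-\e^{-\kappa s})\sup_{r\le s}\|d^{[j-1]}(r)\|_{L^2}$, and unwinding the $k$ levels gives $\|\hat w^{[k],n}(\tau)-w^{[k],n}(\tau)\|_{L^2}\le(1+2\kappa\tau)\delta\le \e^{2\kappa\tau}\delta$. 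Combining the two steps produces $\|e^{n+1}\|_{L^2}\le \e^{2\kappa\tau}\|e^n\|_{L^2}+C\tau^{k+1}$, and the discrete Gronwall inequality with $e^0=0$ delivers $\|e^n\|_{L^2}\le C\tau^{k+1}\sum_{m<n}\e^{2\kappa\tau m}\le C\e^{2\kappa T}\tau^k$, which is exactly the stated factor $\e^{2\kappa T}$.

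\textbf{Main obstacle.} The genuinely delicate point is bounding $\sup_r\|\partial_r^{j}g_j(r)\|_{L^2}$ uniformly in $\tau$. Every time derivative either lands on $\e^{(s-r)\Lk}$, producing an \emph{unbounded} factor of $\Lk$, or on $\hN(\hat P^{[j-1],n}(r))$ through the chain rule; controlling the resulting terms $\Lk^{\ell}\e^{(s-r)\Lk}\partial_r^{j-\ell}\hN(\hat P^{[j-1],n})$ cannot be done via the smoothing of the semigroup (since $s-r$ may be arbitrarily small) and instead forces the regularity hypothesis \eqref{cond_f_regularity}. The crux is therefore to transfer time-regularity of the iterates into spatial regularity through the equation $\partial_s w=\Lk w+\hN(w)$ and to propagate the bounds $\|\Delta^j\hN(u)\|_{L^2}<\infty$, $j\le k$, up the Picard hierarchy so that the iterates $\hat P^{[j-1],n}$ inherit enough $\Delta^j$-regularity; this bookkeeping is where I expect the main effort to lie.
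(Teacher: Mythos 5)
Your architecture is recognizably close to the paper's (a level-by-level induction combining a Lipschitz propagation step, justified by the MBP theorem, with a quadrature-remainder consistency step), but your choice of decomposition creates a genuine gap that you yourself flag and do not close. By inserting the exact Picard iterates $\hat P^{[j],n}$, you make the level-$j$ quadrature remainder act on the integrand $\e^{(s-r)\Lk}\hN(\hat P^{[j-1],n}(r))$, so Lemma \ref{lemma:gauss_quadrature} demands control of powers of $\Lk$ applied to $\hN$ of the \emph{Picard iterates}. Assumption \eqref{cond_f_regularity} only bounds $\Delta^{j}\hN(u(t))$ for the exact solution, and transferring that regularity up the Picard hierarchy is exactly the unproved ``bookkeeping'' you defer to the end. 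The paper's proof is arranged so that this problem never arises: its intermediate quantity $v^{[j],n}$ in \eqref{def:v_j} is the quadrature sum applied to $\hN(u(t_n+s_i))$, so the only function whose $j$th derivative is ever needed is $\e^{(\tau-s)\Lk}\hN(u(t_n+s))$, which is covered directly by \eqref{cond_f_regularity}; the discrepancy between $w^{[j-1],n}$ and $u$ at the quadrature nodes is absorbed into the Lipschitz term $\eta^{[j],n}$ via the inductive hypothesis \eqref{inductive_hypothesis}, with the underestimation property $\sum_i w_i s_i^{j}\le\tau^{j+1}/(j+1)$ upgrading the level-$(j-1)$ local error $O(s_i^{j})$ to $O(\tau^{j+1})$. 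No Picard iterates, and no regularity beyond that of $u$ itself, are needed.

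A second, smaller problem is your stability bound. Unwinding the recursion $\|d^{[j]}(s)\|_{L^2}\le\e^{-\kappa s}\delta+2(1-\e^{-\kappa s})\sup_{r\le s}\|d^{[j-1]}(r)\|_{L^2}$ gives a per-step factor of roughly $1+x\sum_{\ell=0}^{j-1}(2x)^{\ell}$ with $x=1-\e^{-\kappa\tau}$, not $1+2\kappa\tau$; for $\kappa\tau$ of order one and $k\ge3$ this exceeds $1+2\kappa\tau$ and is not dominated by $\e^{2\kappa\tau}$, so the advertised constant $\e^{2\kappa T}$ does not follow uniformly in $\tau<1$ (one only recovers something like $\e^{C_k\kappa T}$). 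The paper avoids this loss by \emph{not} collapsing the lower-level bound to a supremum: it keeps the full polynomial bound $\sum_{\ell=0}^{j-1}(2\kappa s_i)^{\ell}/\ell!$ at the nodes and uses exactness of the quadrature on polynomials of degree at most $j-1$, i.e.\ $\sum_i w_i s_i^{\ell}=\tau^{\ell+1}/(\ell+1)$, so that the amplification telescopes exactly into the truncated exponential $\sum_{\ell=0}^{j}(2\kappa\tau)^{\ell}/\ell!\le\e^{2\kappa\tau}$. Your base Picard estimate and the final Gronwall step are fine; if you replace $\hat P^{[j],n}$ by the paper's comparison function $v^{[j],n}$ and carry the polynomial inductive hypothesis instead of a supremum, the rest of your plan goes through.
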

\begin{proof}
    Denote the error at time \( t_n \) as \( e_n \coloneqq u^n - u(t_n) \). Further, define the error function \( e^{[k],n}(s) \coloneqq w^{[k],n}(s) - u(t_n + s) \). Note that \( e^{[k],n}(0) = e_n \) and \( e^{[k],n}(\tau) = e_{n+1} \).

\textbf{Step 1: error estimate for the first iteration.}    For the first-order scheme \eqref{eq:w1}, the error is expressed as:  
\begin{equation}\label{eq:e_1}
    \begin{aligned}
        e^{[1],n}(\tau) 
        &= \e^{\tau \Lk} e_n + \int_0^\tau \e^{(\tau-s)\Lk} \left(\hN(u^n) - \hN(u(t_n + s))\right) \rd s \\
        &= \e^{\tau \Lk} e_n + \int_0^\tau \e^{(\tau-s)\Lk} R_1 \rd s + \int_0^\tau \e^{(\tau-s)\Lk} R_2 \rd s,
      \end{aligned}
\end{equation}
    where \( R_1 = \hN(u^n) - \hN(u(t_n)) \) and \( R_2(s) = \hN(u(t_n)) - \hN(u(t_n + s)) \).
    Using \eqref{N_prime}, we know
\begin{equation}\label{Nxi1-Nxi2}
        |\hN(\xi_1) - \hN(\xi_2)| \leq 2\kappa |\xi_1 - \xi_2|, \quad \forall \, \xi_1, \xi_2 \in [-\beta, \beta].
\end{equation}
Since $\| u^n \|_{\Linf} \leq \beta$, as established in Theorem~\ref{thm:mp}, we obtain 
    \begin{equation}\label{ineq:R1R2}
        \left\| R_1 \right\|_{L^2} \leq 2\kappa \left\| e_n \right\|_{L^2}, \quad 
        \left\| R_2 \right\|_{L^2} \leq 2\kappa \left\| u(t_n) - u(t_n + s) \right\|_{L^2} \leq 2\kappa \tilde{C}_1 s,
    \end{equation}  
    where \( \tilde{C}_1 \coloneqq \sup_{t \in [0,T]} \left\| u^{\prime}(t) \right\|_{L^2(\Omega)} \).
Using the inequality $\left\| \e^{t\Lk} v \right\|_{L^2} \le \left\| v \right\|_{L^2}$ for any $t > 0$, and the following triangle inequality  
    {\[ \left\| \int_0^\tau g(\bx,t) \rd t \right\|_{L^2} \leq  \int_0^\tau \| g(\bx,t) \|_{L^2} \rd t, \]}
we substitute \eqref{ineq:R1R2} into \eqref{eq:e_1}, giving
\begin{equation}\label{eq:e_1_final}
    \begin{aligned}
        \left\| e^{[1],n}(\tau) \right\|_{L^2} 
        &\le \left\| \e^{\tau\Lk} e_n \right\|_{L^2} + \left\| \int_0^\tau \e^{(\tau-s)\Lk} R_1 \rd s \right\|_{L^2} + \left\| \int_0^\tau \e^{(\tau-s)\Lk} R_2 \rd s \right\|_{L^2} \\
        &\le \left\| e_n \right\|_{L^2} 
        +   \int_0^\tau \left\|  R_1 \right\|_{L^2} \rd s   
        +  \int_0^\tau \left\|  R_2 \right\|_{L^2} \rd s    \\
        &\le (1 + 2\kappa\tau) \left\| e_n \right\|_{L^2} + {\kappa \tilde{C}_1} \tau^2 = (1 + 2\kappa\tau) \left\| e_n \right\|_{L^2} + C_1 \tau^2,
    \end{aligned}
\end{equation}
    where \( C_1 \coloneqq \kappa \tilde{C}_1 \).

\textbf{Step 2: error estimate from \( (j-1) \)-level iteration to \( j \)-level iteration.} For $2\le j\le k$, suppose that 
\begin{equation}\label{inductive_hypothesis}
      \left\| e^{[j-1],n}(\tau) \right\|_{L^2} 
      \le  
      \left[\sum_{\ell=0}^{j-1}\frac{(2\kappa\tau)^\ell}{\ell!}\right] \left\|e_{n}\right\|_{L^2}
      + \left[\sum_{\ell=1}^{j-1}\frac{(\ell+1)!(2\kappa)^{j-1-\ell}}{j!}C_\ell\right] \tau^{j}, 
\end{equation}
    holds for any $\tau > 0$, where $C_{\ell}$ are constants independent of $\tau$ and $\kappa$.
    Note that when $j=2$, the inequality \eqref{inductive_hypothesis} is equivalent to the inequality \eqref{eq:e_1_final} derived above.
    Then, we consider the $j$th level iteration.
    We introduce a function $v^{[j],n}(s)$ between $u(t_n+s)$ and $w^{[j],n}(s)$ as
\begin{equation}\label{def:v_j}
    v^{[j],n} \coloneqq  \e^{\tau \Lk} u(t_n)
    + \sum_{i=1}^{m_j} w_i \e^{(\tau-s_i)\Lk}\hN(u(t_n+s_i))
\end{equation}
    and denote
    \[
    \eta^{[j],n}(s) \coloneqq w^{[j],n}(s) - v^{[j],n}(s), \quad
    \xi^{[j],n}(s) \coloneqq v^{[j],n}(s) - u(t_n+s),
    \]
    which imply the error decomposition
\begin{equation}\label{eq:err_decomp}
    e^{[j],n}(s) = \eta^{[j],n}(s) + \xi^{[j],n}(s).
\end{equation}

\textbf{Step 2.1: estimate for \( \eta^{[j],n} \).} To estimate $\eta^{[j],n}$, considering the difference between \eqref{k_scheme} and \eqref{def:v_j}, we have
    \begin{equation}\label{eq:eta_j}            
    \begin{aligned}
      \left\| \eta^{[j],n}(\tau)   \right\|_{L^2}
      & = \left\| \e^{\tau\Lk}e_{n}+\sum_{i=1}^{m_j}w_i\e^{(\tau-s_i)\Lk}\left[ \hN(w^{[j-1],n}(s_i)) - \hN(u(t_n+s_i)) \right]  \right\|_{L^2} \\
      & \le \left\| e_{n} \right\|_{L^2} + \sum_{i=1}^{m_j}w_i\left\| \hN(w^{[j-1],n}(s_i)) - \hN(u(t_n+s_i)) \right\|_{L^2},
    \end{aligned}
    \end{equation}
    where we have use the property that quadrature weights $w_{i}$ are positive.
    By the discrete maximum principle Theorem~\ref{thm:mp}, we know $\left| w^{[j-1],n}(\bx,s) \right| \le \beta$ for any $s>0$ and $\bx \in \Omega$. Then, by \eqref{Nxi1-Nxi2}, we obtain
\begin{equation}\label{ineq:R_3}
    \left\|  \hN(w^{[j-1],n}(s_i)) - \hN(u(t_n+s_i)) \right\|_{L^2} 
    \le 
    2\kappa \left\| e^{[j-1],n}(s_i) \right\|_{L^2}.
\end{equation}
Putting \eqref{ineq:R_3} in \eqref{eq:eta_j} and using the inductive hypothesis \eqref{inductive_hypothesis}, we obtain

    \begin{equation}\label{eq:eta_j_2}            
    \begin{aligned}
      \left\| \eta^{[j],n}(\tau)   \right\|_{L^2}
      & \le \left\| e_{n} \right\|_{L^2} \\
      &\quad + 2\kappa\sum_{i=1}^{m_j}w_i \left\{ \left[\sum_{\ell=0}^{j-1}\frac{(2\kappa s_i)^\ell}{\ell!}\right] \left\|e_{n}\right\|_{L^2}
      + \left[\sum_{\ell=1}^{j-1}\frac{(\ell+1)!(2\kappa)^{j-1-\ell}}{j!}C_\ell\right] s_i^{j} \right\}. 
    \end{aligned}
    \end{equation}
    Note that the quadrature rule has at least $(j-1)$th-order algebraic accuracy, which implies
    \begin{equation}\label{q_1}
      \sum_{i=1}^{m_j}w_i s_i^{\ell} = \int_{0}^{\tau} s^{\ell} \mathrm{d} s = \frac{\tau^{\ell+1}}{\ell+1}, \quad \ell=0,1,\cdots,j-1.
    \end{equation}
    If the quadrature rule has just $(j-1)$th-order accuracy, 
    since we choose the Gauss rule or the left Gauss--Radau rule, by Lemma~\ref{lemma:gauss_quadrature}, we know that the reminder has the sign of the $j$th derivative of the integrand $s^j$, which is positive in $(0,\tau)$. Therefore, we have
    \begin{equation}\label{q_3}
      \sum_{i=1}^{m_j}w_i s_i^{j} \le \int_{0}^{\tau} s^{j} \mathrm{d} s = \frac{\tau^{j+1}}{j+1}.
    \end{equation}
    If the quadrature rule has $j$th-order or higher-order accuracy, then inequality~\eqref{q_3} attains equality.
Substituting \eqref{q_1} and \eqref{q_3} into \eqref{eq:eta_j_2}, we obtain
    \begin{equation}\label{eta_j_final}
        \begin{aligned}
            \left\| \eta^{[j],n}(\tau) \right\|_{L^2}
            & \le \left[1 + 2\kappa\tau\sum_{\ell=0}^{j-1}\frac{(2\kappa)^\ell}{\ell!} \sum_{i=1}^{m_j}w_i s_{i}^{\ell}  \right] \left\|e_{n}\right\|_{L^2}
            + 2\kappa\tau \left[\sum_{\ell=1}^{j-1}\frac{(\ell+1)!(2\kappa)^{j-1-\ell}}{j!}C_\ell\right] \sum_{i=1}^{m_j}w_i s_i^{j} \\
            &\le \left[\sum_{\ell=0}^{j}\frac{(2\kappa\tau)^\ell}{\ell!}\right] \left\|e_{n}\right\|_{L^2}
            + \left[\sum_{\ell=1}^{j-1}\frac{(\ell+1)!(2\kappa)^{j-\ell}}{(j+1)!}C_\ell\right] \tau^{j+1}. 
        \end{aligned}
    \end{equation}

\textbf{Step 2.2: estimate for \( \xi^{[j],n} \).}  
To derive an estimate for the term $\xi^{[j],n}$, we consider the difference between \eqref{eq:Duhamel_w} and \eqref{def:v_j}:
    \begin{equation}
    \begin{aligned}
      \left\| \xi^{[j],n}(\tau) \right\|_{L^2} 
      & = \left\|  \sum_{i=1}^{m_j} w_i \e^{(\tau-s_i)\Lk}\hN(u(t_n+s_i)) - \int_{0}^{\tau} \e^{(\tau-s)\Lk} \hN(u(t_n+s)) \mathrm{d}x \right\|_{L^2}.
    \end{aligned}
    \end{equation}
    By Lemma~\ref{lemma:gauss_quadrature}, the quadrature error of a $(j-1)$th-order accurate rule is $O(\tau^{j+1})$, provided that the $j$th derivative of the integrand is bounded. This condition is ensured by \eqref{cond_f_regularity}. More precisely, there exists a constant $C_j$ such that
    \begin{equation}\label{xi_j_final}
    \left\| \xi^{[j],n}(\tau) \right\|_{L^2} \le C_j \tau^{j+1}.
    \end{equation}
    Furthermore, if a quadrature rule of $j$th-order or higher-order accuracy is used, the quadrature error remains $O(\tau^{j+1})$, provided that $\tau < 1$.

Combining \eqref{eq:err_decomp}, \eqref{eta_j_final} and \eqref{xi_j_final},  we obtain
\begin{equation}\label{eq:e_j_final}
    \left\| e^{[j],n}(\tau) \right\|_{L^2} \le \left[\sum_{\ell=0}^{j}\frac{(2\kappa\tau)^\ell}{\ell!}\right] \left\|e_{n}\right\|_{L^2}
    + M_j \tau^{j+1},
\end{equation}
where
\begin{equation*}
    M_j \coloneqq \sum_{\ell=1}^{j}\frac{(\ell+1)!(2\kappa)^{j-\ell}}{(j+1)!}C_\ell.
\end{equation*}

    \textbf{Step 3: global error estimate.}     
By induction, the inequality \eqref{eq:e_j_final} holds for any $j\ge 1$. Let $j=k$, then we have
\begin{equation*}
\begin{aligned}
    \left\| e_{n+1} \right\|_{L^2} 
    &= \left\| e^{[k],n}(\tau) \right\|_{L^2}  \le \left[\sum_{\ell=0}^{k}\frac{(2\kappa\tau)^\ell}{\ell!}\right] \left\|e_{n}\right\|_{L^2}
    + M_k\tau^{k+1}\\
    &\le \left[\sum_{\ell=0}^{k}\frac{(2\kappa\tau)^\ell}{\ell!}\right]^{n} \left\|e_{0}\right\|_{L^2} + M_k\tau^{k+1} \sum_{m=0}^{n-1} \left[\sum_{\ell=0}^{k}\frac{(2\kappa\tau)^\ell}{\ell!}\right]^{m}\\
    & = M_k \tau^{k+1} \frac{\left[\sum_{\ell=0}^{k} \frac{(2\kappa\tau)^\ell}{\ell!}\right]^{n}-1}{\left[\sum_{\ell=0}^{k}\frac{(2\kappa\tau)^\ell}{\ell!}\right]-1}\\ 
    &\le M_k \tau^{k+1} \frac{\left[\sum_{\ell=0}^{k} \frac{(2\kappa\tau)^\ell}{\ell!}\right]^{n}}{2\kappa\tau} \le  C \e^{2\kappa t_n} \tau^k,
\end{aligned}
\end{equation*}
where the constant $C = {M_k}/{2\kappa}$. This completes the proof of the theorem.
\end{proof}

\section{Numerical experiments}\label{sec:experiments}
In this section, we carry out some numerical experiments to demonstrate the performance of exponential integrators.
We consider a rectangular domain discretized using a uniform rectangular mesh with a uniform mesh size. The Laplace operator is discretized using the central finite difference method. In this setup, the product of the exponential of the discrete Laplace operator, $\Delta_h$, and a vector can be efficiently computed using the fast Fourier transform (FFT); for implementation details, we refer the reader to~\cite{JuEt2015fast}. 
Since $\e^{t\Delta_h}$ also constitutes a contraction semigroup, the discrete maximum principle remains valid when $\Delta$ is replaced with $\Delta_h$ in Theorem \ref{thm:mp}.

\begin{table}[!htbp]
    \centering
    \renewcommand{\arraystretch}{1.3} 
    \begin{tabular}{|c|c|c|c|}
    \hline
    Number of nodes, $n$ & Nodes, $x_i$ & Weights, $w_i$ & Algebraic accuracy  \\ \hline
    \multirow{2}{*}{2}   & -1 & 1/2 & \multirow{2}{*}{2}    \\ \cline{2-3} 
     & 1/3 & 3/2 & \\ \hline
     \multirow{3}{*}{3}    & -1 & 2/9 & \multirow{3}{*}{4} \\ \cline{2-3} 
     & $(1-\sqrt{6})/5$ &  $(16+\sqrt{6})/18$ & \\ \cline{2-3} 
     & $(1+\sqrt{6})/5$ &  $(16-\sqrt{6})/18$ & \\ \hline
    \end{tabular}
    \caption{The nodes and weights of left Gauss--Radau quadrature rules on the interval $[-1,1]$.}
    \label{tab:GR_quadrature}
\end{table}

To conduct a specific exponential integrator, we need to specify the quadrature rule. 
To compute 
$u^{n+1}$ from $u^{n}$, we use the known value of $u^{n}$ and apply the left Gauss--Radau quadrature rules in the experiments below. 
As shown in Table~\ref{tab:GR_quadrature}, to achieve sufficient accuracy, it is adequate to use the left Gauss--Radau quadrature rule with $2$ nodes for EI2 and EI3, and $3$ nodes for EI4 and EI5. For completeness, we provide below the formulas for EI1 to EI5 (with left Gauss--Radau rules):

    \begin{align}
        w^{[1],n}(\tau) &= \e^{\tau \Lk} u^{n} 
    + (\e^{\tau\Lk}-\hI)\Lk^{-1} \hN(u^{n}),\\
    \label{ex_scheme_2}    w^{[2],n}(\tau) & = \e^{\tau \Lk} u^{n}
    + \frac{\tau}{4} \e^{\tau\Lk}\hN(u^n) 
    + \frac{3\tau}{4} \e^{\frac{1}{3}\tau\Lk}\hN\left(w^{[1],n}\left(\frac{2}{3}\tau\right)\right),  \\
        w^{[3],n}(\tau) & = \e^{\tau \Lk} u^{n}
    + \frac{\tau}{4} \e^{\tau\Lk}\hN(u^n) 
    + \frac{3\tau}{4} \e^{\frac{1}{3}\tau\Lk}\hN\left(w^{[2],n}\left(\frac{2}{3}\tau\right)\right),  \\
    w^{[4],n}(\tau) & = \e^{\tau \Lk} u^{n}
    + \frac{\tau}{9} \e^{\tau\Lk}\hN(u^n) 
    + \tau\sum_{i=1}^{2} c_i  \e^{(1-\theta_i)\tau\Lk}\hN\left(w^{[3],n}\left(\theta_i\tau\right)\right), \\
    w^{[5],n}(\tau) & = \e^{\tau \Lk} u^{n}
    + \frac{\tau}{9} \e^{\tau\Lk}\hN(u^n) 
    + \tau\sum_{i=1}^{2} c_i  \e^{(1-\theta_i)\tau\Lk}\hN\left(w^{[4],n}\left(\theta_i\tau\right)\right),
    \end{align}
where
\[
c_i= \frac{(16\pm \sqrt{6})}{36},\quad \theta_i = \frac{6\mp \sqrt{6}}{10}, \quad i=1,2.
\]

\subsection{Convergence tests}

To validate the accuracy of exponential integrators, we consider the Allen–Cahn equation:  
\begin{equation}\label{eq:AC_cubic}  
    \partial_t u = \varepsilon^2 \Delta u + u - u^3,  
\end{equation}  
on the domain \( \Omega = (0, 2\pi)^2 \) with \( \varepsilon = 0.1 \). The equation is subject to periodic boundary conditions, and the initial condition is given by \( u^0 = 0.1 \sin x \sin y \). We set the stabilization constant to \( \kappa = 2 \) and discretize the domain using a uniform mesh with mesh size \( h = 2\pi / 256 \). 
The maximum bound of this model is $\beta = 1$, and the stabilization constant is chosen as $\kappa = 2$.

The solution is computed at \( T = 1 \) using EI2 to EI5 schemes with time step sizes \( \tau = 0.1 \times 2^{-n} \) for \( n = 0, 1, \dots, 5 \). To evaluate the convergence order of each scheme, we compute relative errors in the \( L^2 \) and \( L^{\infty} \) norms. The results, presented in Table \ref{tab:convergence}, include convergence rates calculated based on errors at different time step sizes.  
As shown in Table \ref{tab:convergence}, the convergence rates for all schemes approach their theoretical values as \( \tau \) decreases.  
\begin{table}[!htbp]
    \centering
    \renewcommand{\arraystretch}{1.2} 
    \begin{tabular}{|c|c|c|c|c|c|}
    \hline
     Method & $\tau$ & $L^2$ Error & Rate & $L^{\infty}$ Error & Rate \\
    \hline
    \multirow{5}{*}{EI2} 
    &$\delta=0.1$  & 2.8404e-01 & - & 2.0802e-03 & - \\
    &$\delta /2$ & 7.8201e-02 & 1.86 & 5.7113e-04 & 1.86 \\
    &$\delta /4$ & 2.0500e-02 & 1.93 & 1.4954e-04 & 1.93 \\
    &$\delta /8$ & 5.2467e-03 & 1.97 & 3.8252e-05 & 1.97 \\
    &$\delta /16$ & 1.3271e-03 & 1.98 & 9.6729e-06 & 1.98 \\
    \hline
    \multirow{5}{*}{EI3}
&$\delta=0.1$   & 2.9742e-02 & - & 2.1523e-04 & - \\
&$\delta /2$  & 4.1182e-03 & 2.85 & 2.9741e-05 & 2.86 \\
&$\delta /4$  & 5.4175e-04 & 2.93 & 3.9087e-06 & 2.93 \\
&$\delta /8$  & 6.9471e-05 & 2.96 & 5.0099e-07 & 2.96 \\
&$\delta /16$ & 8.7955e-06 & 2.98 & 6.3414e-08 & 2.98 \\
    \hline
    \multirow{5}{*}{EI4}
&$\delta=0.1$   & 1.8622e-03 & - & 1.3290e-05 & - \\
&$\delta /2$  & 1.2956e-04 & 3.85 & 9.2309e-07 & 3.85 \\
&$\delta /4$  & 8.5443e-06 & 3.92 & 6.0825e-08 & 3.92 \\
&$\delta /8$  & 5.4856e-07 & 3.96 & 3.9034e-09 & 3.96 \\
&$\delta /16$ & 3.4749e-08 & 3.98 & 2.4721e-10 & 3.98 \\
    \hline
    \multirow{5}{*}{EI5}
&$\delta=0.1$    & 9.5065e-05 & - & 6.6845e-07 & - \\
&$\delta /2$ & 3.3225e-06 & 4.84 & 2.3325e-08 & 4.84 \\
&$\delta /4$ & 1.0981e-07 & 4.92 & 7.7030e-10 & 4.92 \\
&$\delta /8$ & 3.5300e-09 & 4.96 & 2.4751e-11 & 4.96 \\
&$\delta /16$ & 1.1184e-10 & 4.98 & 7.8293e-13 & 4.98 \\
    \hline
    \end{tabular}
    \caption{Temporal convergence rates and errors for EI2 to EI5 methods.}
    \label{tab:convergence}
\end{table}

\subsection{Maximum bound preservation tests}

We consider the Allen--Cahn equation with Flory--Huggins potential
\begin{equation}\label{eq:AC_log}
\partial_t u = \varepsilon^2 \Delta u + f_{\text{FH}},
\end{equation}
where
\begin{equation}\label{f_FH}
    f_{\text{FH}} = \frac{\theta}{2} \ln \frac{1-u}{1+u} + \theta_c u.
\end{equation}
The domain is $\Omega = (0, 2\pi)^2$, and the parameters are set to $\varepsilon = 0.1$, $\theta = 0.8$, and $\theta_c = 1.6$. The equation is subject to periodic boundary conditions.
In this case, preserving the maximum principle is crucial, as the logarithmic term in the equation can lead to numerical instability if the maximum principle is violated.
The maximum bound $\beta$ of this model is the positive root of $f_{\text{FH}}=0$, and the stabilization constant is chosen as 
$$\kappa = \theta/(1-\beta^2)-\theta_c \ge\max_{|\xi| \leq \beta}\left|f_{\text{FH}}^{\prime}(\xi)\right|.$$
In this case, $\beta \approx 0.9575$ and $\kappa = 8.02$.

The spatial domain is discretized using a uniform mesh with a mesh size of $h = 2\pi / 256$. We consider a total simulation time of $T = 50$ and solve the equation using the EI2 to EI5 schemes with left Gauss--Radau rules and time step sizes $\tau = 0.01$ and $\tau = 0.1$. The initial condition is randomly distributed in the range $[-0.8, 0.8]$.

Figures \ref{fig:log_energy_MP_01} and \ref{fig:log_energy_MP_001} present the results, where the energy and supremum norms of the numerical solutions are plotted over time. The supremum norm of the numerical solution is bounded by $\beta$ for all schemes, confirming that the EI2 to EI5 schemes preserve the maximum principle for the Allen--Cahn equation. Additionally, the system's energy decreases over time, and lower-order schemes exhibit time delay effects.

Finally, we perform an experiment to compare the performance of MBP and non-MBP schemes. Specifically, we consider the Allen–Cahn equation with a different scaling law:
\begin{equation}\label{eq:AC_log_2}
\partial_t u = \varepsilon \Delta u + \frac{1}{\varepsilon} f_{\text{FH}},
\end{equation}  
on the domain \( \Omega = (-1, 1)^2 \), subject to periodic boundary conditions, with \( \varepsilon = 0.1 \).
The nonlinear function $f_{\text{FH}}$ is defined in \eqref{f_FH}, and the parameters are set to  \( \theta = 0.68 \) and \( \theta_c = 2.21 \).   
The initial condition is given by  
\begin{equation}
\begin{aligned} 
u^0= & -\beta  \tanh \left[\left(x^2+(y-0.3)^2-0.29^2\right) / \epsilon^2\right] \tanh \left[\left(x^2+(y+0.3)^2-0.29^2\right) / \epsilon^2\right],
\end{aligned}
\end{equation}  
where the maximum bound $\beta$ is the positive root of $f_{\text{FH}}=0$. The stabilization constant is chosen as  $\kappa = [\theta/(1-\beta^2)-\theta_c]/\varepsilon$.

The spatial domain is discretized using a uniform \( 128 \times 128 \) grid. We compute numerical solutions using EI2 to EI4 with both left and right Gauss--Radau quadrature rules. Methods of the same order use the same number of quadrature nodes, and all methods employ a uniform time step size of \( \tau = 10^{-3} \). Additionally, we compute a reference solution using EI5 with the left Gauss--Radau quadrature rule and a smaller time step size of \( \tau = 10^{-4} \).
The terminal time is set as $T=1.5$.


Figure~\ref{fig:dynamics} shows snapshots of numerical solutions at different time points. The solutions computed using EI2 and EI3 with the right Gauss--Radau rule blow up rapidly, as evidenced by the evolution of their supremum norms in Figure~\ref{fig:blowup}. As discussed earlier, the right Gauss–Radau rule does not satisfy the requirement for maximum bound preservation. In contrast, solutions obtained using EI2 to EI4 with the left Gauss--Radau rule strictly preserve the maximum bound principle and exhibit correct dynamics compared to the reference solutions. This aligns with our theoretical results and highlights the critical importance of selecting appropriate quadrature rules. As shown in Figure~\ref{fig:dynamics}, among the EI$k$ methods using the right Gauss--Radau rule, only EI4 avoids blow-up and maintains accurate dynamics, possibly due to its higher accuracy.

Figure~\ref{fig:lGR} illustrates the evolution of energy and supremum norms for solutions that do not blow up. The energy of all these solutions decreases over time. In the right subfigure of Figure~\ref{fig:lGR}, the supremum norm of solutions computed by EI4 with the right Gauss--Radau rule slightly exceeds the maximum bound $\beta$, but this exceed part is minimal due to its high accuracy, allowing the solution to remain stable without blowing up. 


\begin{figure}[htbp]
    \centering
    \begin{subfigure}[b]{0.48\textwidth}
        \centering
        \includegraphics[width=\textwidth]{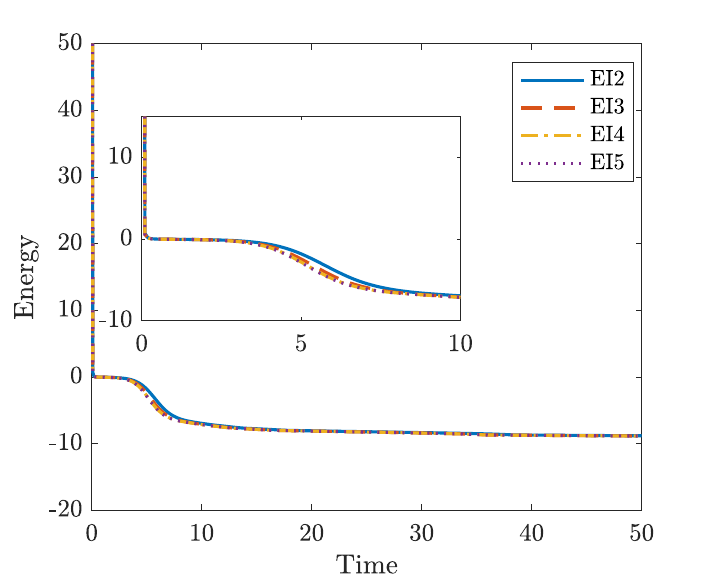}
    \end{subfigure}
    \hfill
    \begin{subfigure}[b]{0.48\textwidth}
        \centering
        \includegraphics[width=\textwidth]{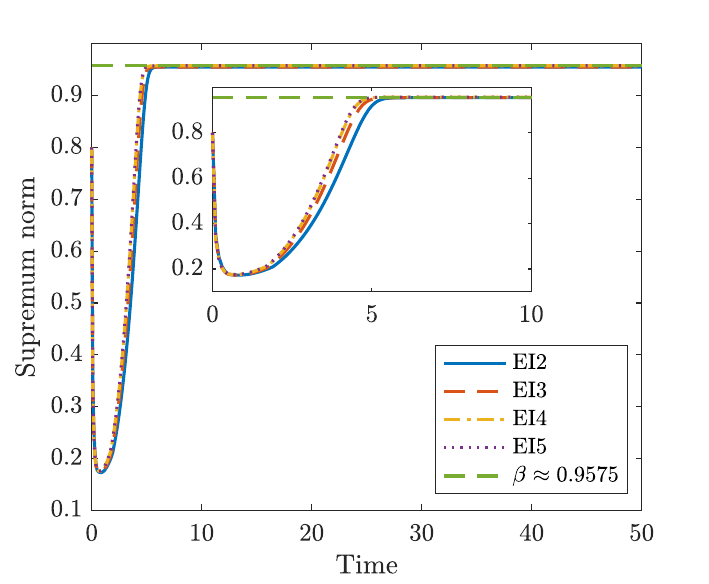}
    \end{subfigure}
    \caption{Evolution of the energy and supremum norm of numerical solutions for \eqref{eq:AC_log}, computed using the EI2 to EI5 schemes with left Gauss–Radau quadrature (time step size $\tau=0.1$). }
    \label{fig:log_energy_MP_01}
\end{figure}

\begin{figure}[htbp]
    \centering
    \begin{subfigure}[b]{0.48\textwidth}
        \centering
        \includegraphics[width=\textwidth]{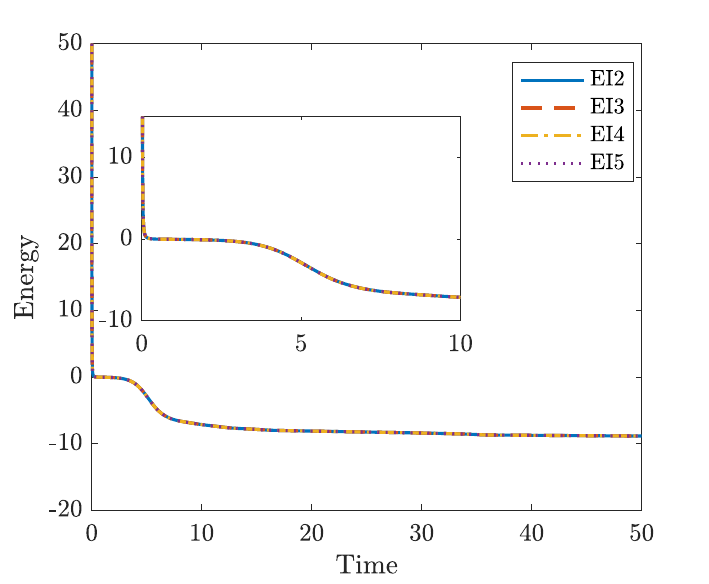}
    \end{subfigure}
    \hfill
    \begin{subfigure}[b]{0.48\textwidth}
        \centering
        \includegraphics[width=\textwidth]{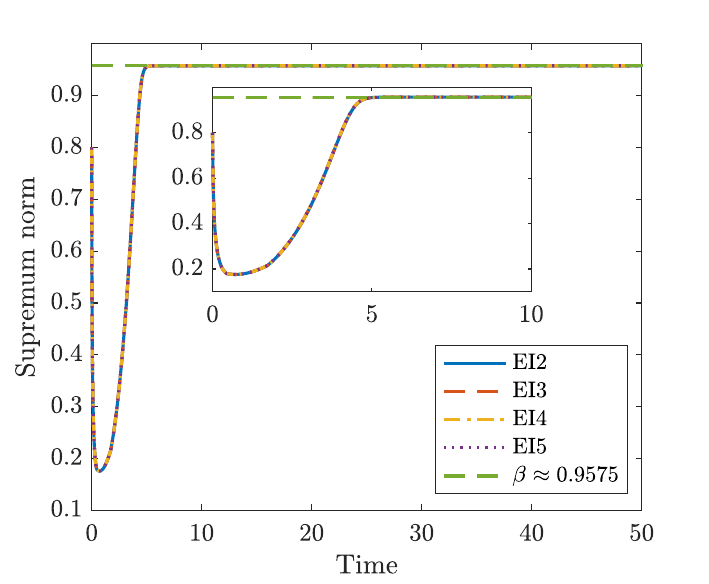}
    \end{subfigure}
    \caption{Evolution of the energy and supremum norm of numerical solutions for \eqref{eq:AC_log}, computed using the EI2 to EI5 schemes with left Gauss–Radau quadrature (time step size $\tau=0.01$). }
    \label{fig:log_energy_MP_001}
\end{figure}

\begin{figure}[htbp]
    \centering
    \includegraphics[width=\textwidth]{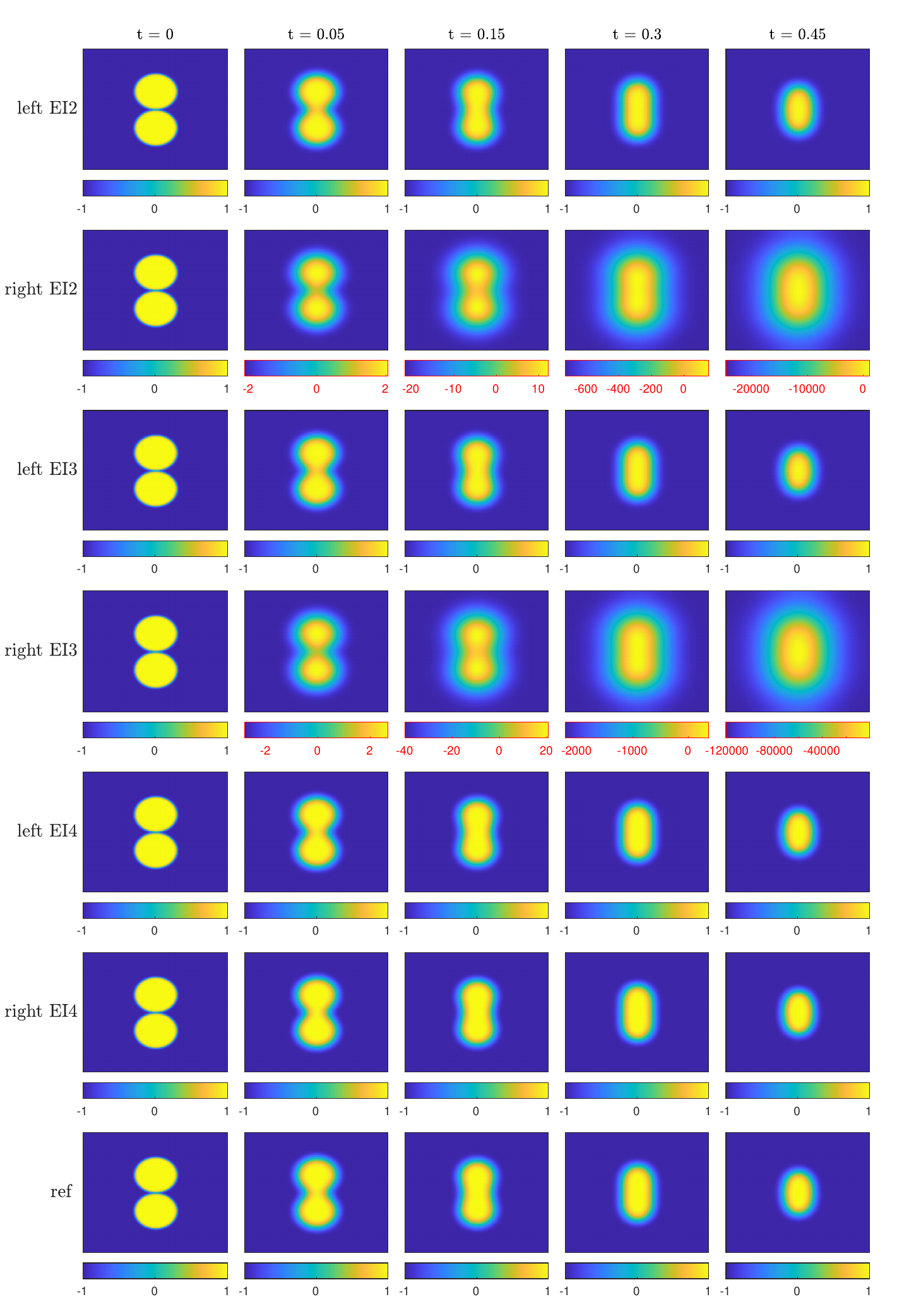}
\caption{Solution snapshots of EI2 to EI4 using left/right Gauss–Radau rules with $\tau = 10^{-3}$, and EI5 (reference) using the left Gauss–Radau rule with $\tau = 10^{-4}$ for solving \eqref{eq:AC_log_2}. (The solutions of right EI2 and right EI3 blow up.)}
    \label{fig:dynamics}
\end{figure}

\begin{figure}[htbp]
    \centering
    \begin{subfigure}[b]{0.48\textwidth}
        \centering
    \includegraphics[width=\textwidth]{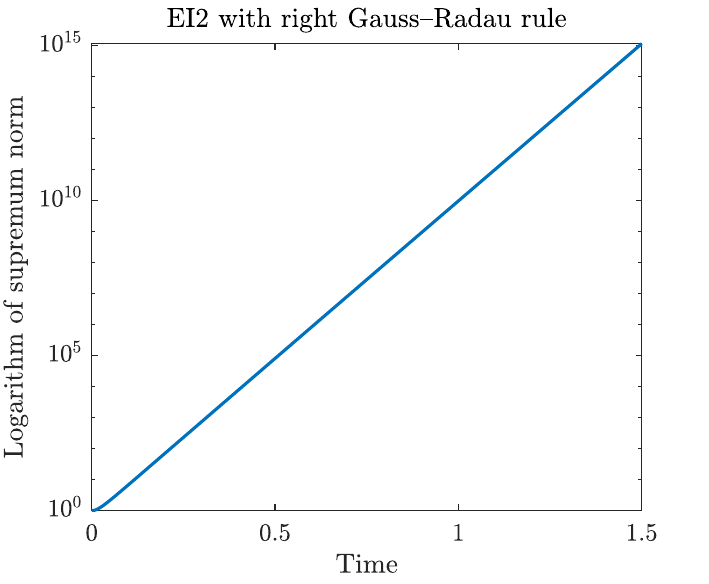}
    \end{subfigure}
    \hfill
    \begin{subfigure}[b]{0.48\textwidth}
        \centering
    \includegraphics[width=\textwidth]{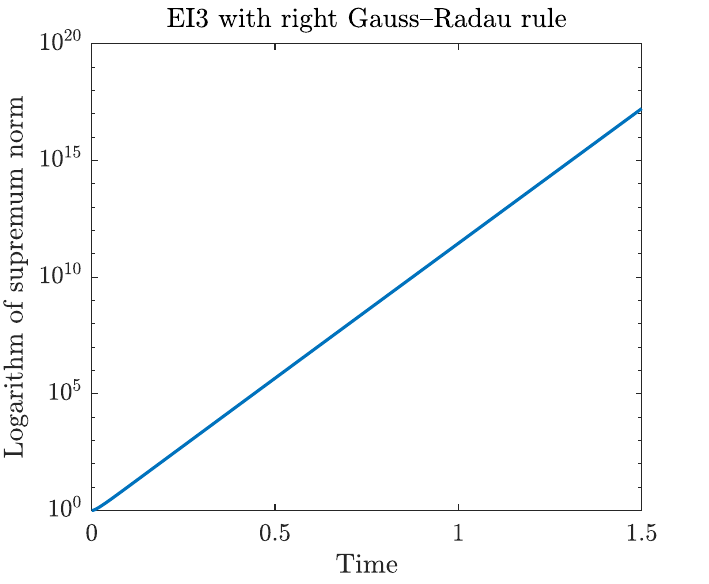}
    \end{subfigure}

    \caption{Evolution of the supremum norm of numerical solutions computed by the EI2 and EI3 using right Gauss--Radau rule with $\tau = 10^{-3}$ for solving \eqref{eq:AC_log_2}.}
    \label{fig:blowup}
\end{figure}

\begin{figure}[htbp]
    \centering
    \begin{subfigure}[b]{0.48\textwidth}
        \centering
        \includegraphics[width=\textwidth]{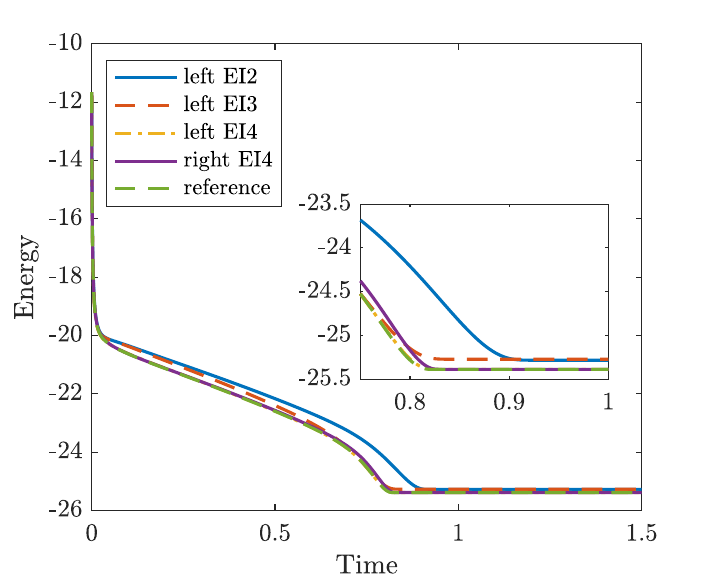}
    \end{subfigure}
    \hfill
    \begin{subfigure}[b]{0.48\textwidth}
        \centering
        \includegraphics[width=\textwidth]{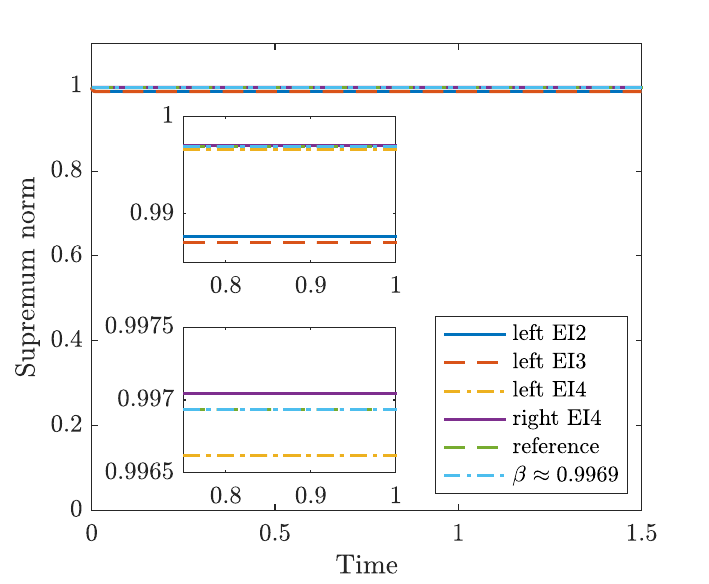}
    \end{subfigure}
\caption{Evolution of the energy and supremum norm of numerical solutions  the computed by EI2 to EI4 schemes using the left Gauss--Radau rule ($\tau = 10^{-3}$), EI4 using the right Gauss--Radau rule ($\tau = 10^{-3}$), and EI5 (reference) using the left Gauss--Radau rule ($\tau = 10^{-4}$) for solving \eqref{eq:AC_log_2}.}
    \label{fig:lGR}
\end{figure}


\section{Concluding remarks}
In this work, we develop and analyze a family of arbitrarily high-order, maximum-bound-preserving time-stepping schemes for solving the Allen–Cahn equation. These schemes are designed within the iterative framework of exponential integrators and incorporate carefully chosen numerical quadrature rules, specifically the Gauss–Legendre and left Gauss–Radau quadrature rules. We rigorously prove that the proposed methods unconditionally preserve the maximum principle without requiring any additional postprocessing techniques, while maintaining arbitrarily high-order temporal accuracy. Rigorous error analysis is provided. 

To validate the theoretical results and demonstrate the practical performance of the proposed methods, we conduct numerical experiments. The experiments confirm that the schemes successfully preserve the maximum principle while achieving high levels of accuracy. Additionally, the results emphasize the critical role of selecting appropriate quadrature rules. Poor choices of quadrature can violate the maximum principle, distort the solution dynamics, and even result in numerical blow-up.

Our numerical experiments confirm that the schemes effectively preserve the maximum principle while achieving high accuracy. Additionally, the experiments underscore the critical role of selecting appropriate quadrature rules. An unsuitable choice of quadrature can violate the maximum principle, distort the solution dynamics, and even lead to numerical blow-up.

For future work, it would be valuable to further investigate the energy dissipation property of numerical methods for the Allen--Cahn equation. Numerical schemes that preserve both the maximum principle and the energy dissipation law could have broader applicability in practical simulations.

\section*{Acknowledgments}
The work of C. Quan is supported by National Natural Science Foundation of China  (Grant No. 12271241), Guangdong Provincial Key Laboratory of Mathematical Foundations for Artificial Intelligence (2023B1212010001), Guangdong Basic and Applied Basic Research Foundation (Grant No. 2023B1515020030), and Shenzhen Science and Technology Innovation Program (Grant No. JCYJ20230807092402004). The work of Z. Zhou is supported by by National Natural Science Foundation of China (Project 12422117), Hong Kong Research Grants Council (15303122) and an internal grant of Hong Kong Polytechnic University (Project ID: P0038888, Work Programme: ZVX3).

\bibliographystyle{siamplain}

\end{document}